\newtheorem{thm}{Theorem}[section]
\newtheorem{defn}{Definition}[section]
\newtheorem{lem}{Lemma}[section]
\newtheorem{rem}{Remark}[section]
\newtheorem{example}{Example}[section]
\title
{$L^p$-norm estimate for the Bergman projection on  Hartogs triangle}
\author{\normalsize Tomasz Beberok \\
\small Faculty of Mathematics and Computer Science, Jagiellonian University,\\
\small Lojasiewicza 6, 30-048 Krakow, Poland \\}
\date{}
\begin{document}

\begin{center}
  \textbf{Markov's inequality and $C^{\infty}$ functions on algebraic sets
}
\end{center}
\vskip1em
\begin{center}
  Tomasz Beberok
\end{center}

\vskip2em

\noindent \textbf{Abstract.} It is known that for $C^{\infty}$ determining sets Markov's property is equivalent to Bernstein's property. The purpose of this paper is to prove an analogous result in the case of compact subsets of algebraic varieties.
\vskip1em

\noindent \textbf{Keywords:}  Markov inequality; $C^{\infty}$ functions; algebraic sets
\vskip1em
\noindent \textbf{AMS Subject Classifications:} primary  41A17, 14P05,   secondary 41A25, 41A10\\

\section{Introduction}
\label{}
Jackson's famous estimate of the error of the best polynomial approximation for a fixed function is one of the main theorems in constructive function theory. According to a multivariate version of the classical Jackson theorem (see e.g. \cite{Tim}), if $I$ is a compact cube in  $\mathbb{R}^N$ and $f :  I \rightarrow \mathbb{R}$ is a $\mathcal{C}^{k+1}$ function on $I$ then
   \begin{align*}
     n^k{\rm dist}_{I}(f,\mathcal{P}_n)  \leq  C_k \sum_{j=1}^{N} \sup_{x \in I} \left| \frac{\partial^{k+1} f }{\partial x_j^{k+1}} (x) \right|,
   \end{align*}
where the constant $C_k$ depends only on $N$, $I$ and $k$. As usual,
$ {\rm dist}_{I}(f,\mathcal{P}_n) =\inf\{\|f-p\|_{I} : p\in \mathcal{P}_n\}$, \ $\mathcal{P}_n$ is the space of all algebraic polynomials of degree at most $n$ and $\|\cdot\|_{I}$ is the sup norm on $I$. \newline
\indent As an application of Jackson's theorem, one can prove classical results like the well known Bernstein theorem (see e.g. \cite{DL}, \cite{Ch}) which allows to obtain a characterization of $C^\infty$ functions:

\vskip 2mm
\noindent {\it A function $f$ defined on $I$ can be extended to a $C^\infty$ function on $\mathbb{R}^N$ if and only if }
    \begin{align*} \lim_{n \rightarrow\infty} n^k  {\rm dist}_{I} (f,\mathcal{P}_n) \ = \ 0 \ \ \ \ \ {for \ all \ positive \ integer \ numbers \ k}.\end{align*}
A natural question arises: for which compact subsets $E$ of $\mathbb{R}^N$ the following Bernstein property holds?
\vskip 2mm
\noindent  {\it  for every function $f:E\to \mathbb{R}$ if the sequence $\{\text{dist}_E(f, \mathcal{P}_n)\}_n$ is rapidly decreasing (i.e. $\lim\limits_{n\to \infty} n^k {\rm dist}_E(f, \mathcal{P}_n) =0$ for all $k>0$), \ then there exists a $C^\infty$ function $F : \mathbb{R}^N \to \mathbb{R}$ such that $F=f$ on $E$.}
\vskip 2mm
In 1990, Pleśniak proved an important theorem \cite{P1} (see  also \cite{PP} for previous results) that answers to an above question and provides equivalence of the Markov inequality
   \begin{align*}
     \|D^{\alpha}P\|_E \leq M (\deg P)^{r|\alpha|} \|P \|_E, \quad  \alpha \in \mathbb{Z}^N_{+},
   \end{align*}
and Bernstein's property for $C^{\infty}$ determining sets. Our goal is to find a generalization of this fact for sets which are not $C^{\infty}$ determining. \newline
\indent The problems that appear naturally in various fields of science, especially in natural and technical science concern not only $C^\infty$ determining sets but also curves and varieties in spaces of several variables. Therefore, the specialists in approximation theory start to work with algebraic sets, analytic varieties etc. In the early seventies of the twentieth century, approximation problems on compact subsets of algebraic sets were considered in the pioneering Ragozin papers (\cite{Ragozin70}, \cite{Ragozin71}). Fundamental criterion of algebraicity for a pure-dimensional analytic subvariety in $\mathbb{C}^N$ is due to Sadullaev \cite{S}. This criterion indicates why polynomial approximation makes sense on compact subsets of algebraic varieties. Zeriahi (see \cite{Ze3,Ze2}) applied Sadullaev's criterion to obtain Bernstein-Walsh-Siciak theorems for compact subsets of algebraic varieties. \newline
\indent Some generalizations of the classical polynomial inequalities considered for curves and submanifolds in $\mathbb{R}^N$ led to show interesting characterizations. For instance, according to \cite{Bos95}, a $C^{\infty}$ submanifold $K$ of  $\mathbb{R}^N$ admits the tangential Markov inequality with the exponent one if and only if $K$ is algebraic. \newline
\indent Over the last few years, results concerning interpolation, polynomial inequalities and pluripotential theory on algebraic sets and analytic varieties have gained in interest, see e.g. papers by Ma'u (see e.g. \cite{BM}, \cite{BoM}), Bos, Brudnyi (see e.g. \cite{2010}, \cite{B}), Bos, Levenberg, Waldron (see e.g. \cite{BLW}), Cox (see e.g. \cite{CM}), Yomdin (see e.g. \cite{HY}, \cite{FY}), Izzo (see \cite{Izzo}), Fefferman \cite{Fef96}, Baran and Ple\'sniak \cite{2000a,2000b}, Skiba \cite{Sk} etc. This is an active branch of mathematics, see for instance the recent paper \cite{LCK} by Bia{\l}as-Cie\.{z}, Calvi and  Kowalska regarding polynomial inequalities on certain algebraic hypersurfaces.
\section{Markov inequality}
Our intention in this section is to study an extension of the Markov inequality to compact subsets of algebraic set. We will consider sets of the form
      \begin{align}\label{V}
        V=\left\{(x_1,\ldots,x_N) \in \mathbb{R}^N : x_k^d=Q_0(y)+Q_1(y)x_k + \cdots + Q_{d-1}(y)x_k^{d-1}\right\},
      \end{align}
where $Q_i$ are polynomials for every $0\leq i \leq d-1$ and $y=(x_1,\ldots,x_{k-1},x_{k+1},\ldots,x_N) \in \mathbb{R}^{N-1}$.
Since every polynomial $P$ from $\mathcal{P}(x_1,\ldots,x_N)$ , on $V$,  coincides with some polynomial from $\mathcal{P}(y) \otimes \mathcal{P}_{d-1}(x_k)$ (see \cite{LCK}), we have that the ring of polynomials on $V$ is
      \begin{align}\label{P(V)}
        \mathcal{P}(V):=\left\{P_{|V}, P \in \mathcal{P}(x_1,\ldots,x_N) \right\}=\left\{P_{|V}, P \in \mathcal{P}(y) \otimes \mathcal{P}_{d-1}(x_k) \right\},
      \end{align}
where $\mathcal{P}(y) \otimes \mathcal{P}_{d-1}(x_k)$ denotes the subspace of $\mathcal{P}(x_1,\ldots,x_N)$ formed of all polynomials of the form $\sum_{i=0}^{d-1} G_i(y)x_k^i$ with $G_i \in \mathcal{P}(y)$.
A considerations in \cite{BK} and \cite{LCK}  suggest the following definition
\begin{defn}[Markov set and Markov inequality on \textbf{F}]
  Let $\textbf{F}$ be an infinite dimensional subspace of $\mathcal{P}(x_1,\ldots,x_N)$ such that $P \in \textbf{F}$ implies $D^{\alpha}P \in \textbf{F}$ for all $\alpha \in \mathbb{Z}^N_{+}$. A compact set $E \subset \mathbb{R}^N$  is said to be a $\textbf{F}$-Markov set if there exist $M, m > 0$ such that
      \begin{align}\label{Markov}
        \|D^{\alpha} P\|_E \leq M^{|\alpha|} (\deg P )^{m|\alpha|} \|P\|_E, \quad P \in \textbf{F}, \quad \alpha \in \mathbb{Z}^N_{+}.
      \end{align}
This inequality is called a $\textbf{F}$-Markov inequality for $E$.
\end{defn}
Note that the space $\mathcal{P}(y) \otimes \mathcal{P}_{d-1}(x_k)$ is obviously invariant by derivation and it suffices to check the property for $|\alpha| = 1$. Now we give an example to demonstrate that the above definition makes sense.
    \begin{example}\label{Ex1}
      Let $V=\{y^3=(1-x^2)y\} \subset \mathbb{R}^2$. The compact set $E =\{(x,y) \in V : x \in [-1,1]\} $ is a $\mathcal{P}(x) \otimes \mathcal{P}_{2}(y)$-Markov.
    \end{example}
      \begin{proof}
        Let $P \in \mathcal{P}(x) \otimes \mathcal{P}_{2}(y)$. Then $P(x,y)=G_0(x) + G_1(x)y + G_2(x)y^2$ for some $G_i \in \mathcal{P}(x)$ for $i=0,1,2$. Now
           \begin{align*}
             \left\|D^{(1,0)} P(x,y) \right\|_E \leq& \left\|G'_0(x)\right\|_E + \left\|G'_1(x)y + G'_2(x)y^2\right\|_E \\&=\left\|G'_0(x)\right\|_{[-1,1]} + \left\|G'_1(x)y + G'_2(x)y^2\right\|_{E'}
           \end{align*}
where $E'=\{(x,y) \in \mathbb{R}^2 : y^2=1-x^2\}$. Since $E'$ is symmetric we have
         \begin{align*}
             \left\|D^{(1,0)} P(x,y) \right\|_E  \leq \left\|G'_0(x)\right\|_{[-1,1]} + \left\|G'_1(x)\sqrt{1-x^2}\right\|_{[-1,1]} + \left\|G'_2(x)(1-x^2)\right\|_{[-1,1]}.
         \end{align*}
%Using the fact that $1-x^2 \leq 1$ on $[-1,1]$, we find that
%      \begin{align*}
%          \left\|D^{(1,0)} P(x,y) \right\|_E \leq \left\|G'_0(x)\right\|_{[-1,1]} + \left\|G'_1(x)\right\|_{[-1,1]} + \left\|G'_2(x)\right\|_{[-1,1]}.
%      \end{align*}
By the classical Markov inequality, Bernstein’s inequality and \cite{LBC}, respectively, we get
     \begin{align*}
        \left\|D^{(1,0)} P(x,y) \right\|_E \leq& (\deg G_0)^2\left\|G_0(x)\right\|_{[-1,1]} + \deg G_1 \left\|G_1(x)\right\|_{[-1,1]} \\&+ 2(2 + \deg G_2)^2 \left\|G_2(x)(1-x^2)\right\|_{[-1,1]}
     \end{align*}
The classical inequality of Schur yields the following
     \begin{align*}
        \left\|D^{(1,0)} P(x,y) \right\|_E \leq& (\deg G_0)^2\left\|G_0(x)\right\|_{[-1,1]} + (\deg G_1+1)^2\left\|G_1(x)\sqrt{1-x^2}\right\|_{[-1,1]} \\&+ 2(2+\deg G_2)^2\left\|G_2(x)(1-x^2)\right\|_{[-1,1]}
     \end{align*}
Therefore we see immediately that
    \begin{align*}
      \left\|D^{(1,0)} P(x,y) \right\|_E \leq 2(\deg P)^2 \left(\left\|G_0(x)\right\|_{[-1,1]} + \left\|G_1(x)y+ G_2(x)y^2\right\|_{E'}\right)
    \end{align*}
The fact that $\|G_0(x)\|_{[-1,1]} \leq \|P\|_E$, together with the triangle inequality, implies
     \begin{align*}
       \left\|D^{(1,0)} P(x,y) \right\|_E \leq 6(\deg P)^2\left\|P\right\|_E.
     \end{align*}
Next, we consider the case of $D^{(0,1)}$. It is clear that
    \begin{align*}
      \left\|D^{(0,1)} P(x,y) \right\|_E \leq \left\|G_1(x)\right\|_E + \left\|G_2(x)y\right\|_E
    \end{align*}
Again by Schur's inequality and Lemma 2.4 in \cite{MB}, we have
     \begin{align*}
       \left\|D^{(0,1)} P(x,y) \right\|_E \leq&  (\deg G_1+1)\left\|G_1(x)\sqrt{1-x^2}\right\|_{[-1,1]} \\&+ (1+\deg G_2)^2\left\|G_2(x)(1-x^2)\right\|_{[-1,1]}
     \end{align*}
Now a similar proof to that of the previous case gives the following
     \begin{align*}
       \left\|D^{(0,1)} P(x,y) \right\|_E \leq 2(\deg P)^2\left\|P\right\|_E.
     \end{align*}
That is what we wished to prove.
\end{proof}
Next example shows that $\textbf{F}$-Markov inequality depends not only on the set but also on the family $\textbf{F}$.
   \begin{example}
     Consider set $V=\{y^3=1-x^2\} \subset \mathbb{R}^2$. The compact set $E=\{(x,y) \in V : x \in [-\frac{1}{2},-\frac{1}{4}] \cup [\frac{1}{4},\frac{1}{2}]\}$ is a $\mathcal{P}(y) \otimes \mathcal{P}_{1}(x)$-Markov, but it is not $\mathcal{P}(x) \otimes \mathcal{P}_{2}(y)$-Markov.
   \end{example}
\begin{proof}
  The fact that $E=\{(x,y) \in V : x \in [-\frac{1}{2},-\frac{1}{4}] \cup [\frac{1}{4},\frac{1}{2}]\}$ is a $\mathcal{P}(y) \otimes \mathcal{P}_{1}(x)$-Markov follows form \cite{BK} and \cite{Bor}. So we need only show that $E$ is not $\mathcal{P}(x) \otimes \mathcal{P}_{2}(y)$-Markov. Seeking a contradiction, we consider the sequence of polynomials
       \begin{align*}
         P_n(x,y)=y-\sum_{k=0}^{n} \frac{\Gamma(k-1/3)}{\Gamma(-1/3) k! } x^{2k}.
       \end{align*}
It is well known that
      \begin{align*}
        \sqrt[3]{1-x^2}=\sum_{k=0}^{\infty} \frac{\Gamma(k-1/3)}{\Gamma(-1/3) k! } x^{2k} \quad \text{for} \quad |x| \leq 1.
      \end{align*}
Hence
     \begin{align*}
       \|P_n(x,y)\|_E&=\left\|\sum_{k=n+1}^{\infty} \frac{\Gamma(k-1/3)}{\Gamma(-1/3) k! } x^{2k}\right\|_{[-\frac{1}{2},-\frac{1}{4}] \cup [\frac{1}{4},\frac{1}{2}]}\\&=\left\| \frac{x^{2+2 k} \Gamma\left(\frac{1}{3} (2+3 k)\right) F\left(1,\frac{2}{3}+k,2+k,x^2\right)}{\Gamma\left(-\frac{1}{3}\right) \Gamma(2+k)} \right\|_{[-\frac{1}{2},-\frac{1}{4}] \cup [\frac{1}{4},\frac{1}{2}]}
     \end{align*}
where $F$ is the hypergeometric function defined for $|z| < 1$ by the power series
     \begin{align*}
       F(a,b;x;z)=\sum_{n=0}^{\infty} \frac{(a)_n (b)_n}{(c)_n} \frac{z^n}{n!}.
     \end{align*}
Here $(q)_n$ is the (rising) Pochhammer symbol.
If $x \in [0,1]$, then $F\left(1,\frac{2}{3}+k,2+k,x^2\right)$ is the increasing function of $x$, since its Taylor coefficients are all positive. Therefore, by $F(a,b;c;1)=\frac{\Gamma(c) \Gamma(c-a-b)}{\Gamma(c-a)\Gamma(c-b)}$ and $z\Gamma(z)=\Gamma(z+1)$, we have
     \begin{align*}
       F\left(1,\frac{2}{3}+k,2+k,x^2\right) \leq F\left(1,\frac{2}{3}+k,2+k,1\right)=\frac{ \Gamma(2+k) \Gamma\left(\frac{1}{3}\right) }{\Gamma\left(\frac{4}{3}\right) \Gamma(k+1)}=3(1+k)
     \end{align*}
If we recall that $\lim_{n\rightarrow \infty} \frac{\Gamma(n+\alpha)}{\Gamma(n)n^{\alpha}}=1$, then we may conclude that
     \begin{align*}
       \lim_{n\rightarrow \infty} n^r \|P_n(x,y)\|_E=0 \quad \text{for all} \quad r>0.
     \end{align*}
This gives a contradiction, and the result is established.
\end{proof}
The Example \ref{Ex1} illustrates the more general idea.
\begin{example}
Combining methods used in \cite{BK} with method from Example \ref{Ex1}, one can provide other examples of $\mathcal{P}(y) \otimes \mathcal{P}_{2}(x_k)$-Markov sets by considering algebraic sets of the form
   \begin{align*}
     V=\{ (x_1,\ldots,x_N) \in \mathbb{R}^N : x_k^3=Q(y)x_k\}
   \end{align*}
where $Q_j \in \mathcal{P}(y)$ and $y=(x_1,\ldots,x_{k-1},x_{k+1},\ldots,x_N) \in \mathbb{R}^{N-1}$.
\end{example}
\section{$C^{\infty}$ functions}
First we introduce the subspace of the space $C^{\infty}(\mathbb{R}^N)$ related to an algebraic set defined by (\ref{V}). We define
    \begin{align}\label{C}
      C^{\infty}_V(\mathbb{R}^N):=\left\{f \in C(\mathbb{R}^N) :  \forall_{r>0} \lim_{n\rightarrow \infty} n^r\text{dist}_I\left(f,\mathcal{P}_n(y) \otimes \mathcal{P}_{d-1}(x_k)\right) \right. \nonumber \\ \left. \text{ for every compact cube } I \text{ in } \mathbb{R}^N\right\}.
    \end{align}
Since every cube $I$ is a Markov set, then by Ple\'{s}niak's theorem (see \cite{P1}) $C^{\infty}_V(\mathbb{R}^N) \subset C^{\infty}(\mathbb{R}^N)$. It should be noted that Ple\'{s}niak's result, together with the Jackson theorem, implies
\begin{align*}
      C^{\infty}(\mathbb{R}^N)=\left\{f \in C(\mathbb{R}^N) :  \forall_{r>0} \lim_{n\rightarrow \infty} n^r\text{dist}_I\left(f,\mathcal{P}_n(x_1,\ldots,x_N)\right) \right. \\ \left. \text{ for every compact cube } I \text{ in } \mathbb{R}^N\right\}.
    \end{align*}
We say that $f$ is a $C^{\infty}_V$ function on a compact subset $E$ of $V$ if, there exists a function $\tilde{f} \in C^{\infty}_V(\mathbb{R}^N)$ with $\tilde{f}_{|E}=f$. We denote by $C^{\infty}_V(E)$  the space of such functions. Following Zerner \cite{Z}, similarly as in \cite{P1},  we introduce in $C^{\infty}_V(E)$ the seminorms $\delta_{-1}(f):=\|f\|_E$, $\delta_0(f):=\text{dist}_E(f,\mathcal{P}_0(y) \otimes \mathcal{P}_{d-1}(x_k))$ and
     \begin{align*}
       \delta_\nu(f):=\sup_{l \geq 1} l^\nu \text{dist}_E(f,\mathcal{P}_l(y) \otimes \mathcal{P}_{d-1}(x_k))
     \end{align*}
for $\nu=1,2,\ldots$. By the definition of the set $C^{\infty}_V(\mathbb{R}^N)$ the $\delta_\nu$'s are indeed seminorms on $C^{\infty}_V(E)$. Let $\tau_J$ be the topology for $C^{\infty}_V(E)$ determined by the seminorms $\delta_\nu \, (\nu =-1,0, \ldots)$. In general, this topology is not complete. \newline
The natural topology $\tau_0$ on the set $C^{\infty}(\mathbb{R}^N)$ is  determined by the seminorms $|\cdot|^\nu_K$,
       where for each compact set $K$ in $\mathbb{R}^N$ and each $\nu=0,1,\ldots$,
    \begin{align*}
      |f|^\nu_K:=\max_{|\alpha| \leq \nu} \|D^{\alpha} f\|_K.
    \end{align*}
Therefore we consider the topology $\tau_Q$ for $C^{\infty}_V(E)$ determined by the seminorms
    \begin{align*}
      q_{K,\nu}(f):=\inf\left\{|\tilde{f}|^\nu_K : f \in  C^{\infty}_V(\mathbb{R}^N), \,\tilde{f}_{|E}=f \right\}
    \end{align*}
Then $\tau_Q$ is exactly the quotient topology of the space $C^{\infty}_V(\mathbb{R}^N)/I(E)$, where $C^{\infty}_V(\mathbb{R}^N)$ is endowed with the natural topology $\tau_0$ and $I(E):=\{f \in C^{\infty}_V(\mathbb{R}^N) : f_{|E}=0\}$. Since $( C^{\infty}(\mathbb{R}^N), \tau_0)$ is
complete and $C^{\infty}_V(\mathbb{R}^N)$ is a closed subspace of $C^{\infty}(\mathbb{R}^N)$, the space $( C^{\infty}_V(\mathbb{R}^N), \tau_0)$ is also complete. In view of the fact that $I(E)$ is a closed subspace of $( C^{\infty}_V(\mathbb{R}^N), \tau_0)$,  the quotient space $C^{\infty}_V(\mathbb{R}^N)/I(E)$ is complete, whence $(C^{\infty}_V(E), \tau_Q)$ is a Fr\'{e}chet space.

To prove the main result, we will need the following lemma (see, e.g., \cite{M}, 1.4.2).
   \begin{lem}\label{fh}
     There are positive constants $C_{\alpha}$ depending only on $\alpha \in \mathbb{Z}^N_{+}$ such that for each compact set $K$ in $\mathbb{R}^N$ and each $\epsilon > 0$, one can find a $C^{\infty}$ function $h$ on $\mathbb{R}^N$ satisfying $0\leq h \leq 1$ on $\mathbb{R}^N$, $h=1$ in a neighborhood of $K$, $h(x)=0$ if $\text{dist}(x,K) > \epsilon$, and for all $x \in \mathbb{R}^N$ and $\alpha \in \mathbb{Z}^N_{+}$, $|D^{\alpha} h(x)| \leq C_{\alpha} \epsilon^{-|\alpha|}$.
   \end{lem}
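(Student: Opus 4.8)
The plan is to obtain $h$ by mollifying the characteristic function of an intermediate neighbourhood of $K$. Fix once and for all a function $\varphi \in C^{\infty}(\mathbb{R}^N)$ with $\varphi \geq 0$, $\operatorname{supp}\varphi \subset \{x : |x| \leq 1\}$ and $\int_{\mathbb{R}^N}\varphi\,dx = 1$ (for instance a suitably normalized multiple of $x \mapsto \exp\bigl(1/(|x|^{2}-1)\bigr)$ on the open unit ball, extended by $0$). For $\delta > 0$ put $\varphi_{\delta}(x) := \delta^{-N}\varphi(x/\delta)$, so that $\varphi_{\delta} \geq 0$, $\operatorname{supp}\varphi_{\delta} \subset \{|x| \leq \delta\}$, $\int\varphi_{\delta}\,dx = 1$, and $D^{\alpha}\varphi_{\delta}(x) = \delta^{-N-|\alpha|}(D^{\alpha}\varphi)(x/\delta)$. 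Write $K_{t} := \{x \in \mathbb{R}^N : \operatorname{dist}(x,K) \leq t\}$ for the closed $t$-neighbourhood of $K$, and given $\epsilon > 0$ define
\[
  h := \chi_{K_{\epsilon/2}} * \varphi_{\epsilon/4},
\]
where $\chi_{K_{\epsilon/2}}$ is the characteristic function of $K_{\epsilon/2}$.

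The first step is to record the regularity and the two-sided bound. Since $\chi_{K_{\epsilon/2}}\in L^{\infty}(\mathbb{R}^N)$ has compact support and $\varphi_{\epsilon/4}\in C^{\infty}_{c}(\mathbb{R}^N)$, the convolution $h$ is $C^{\infty}$ on $\mathbb{R}^N$ and, by differentiation under the integral sign, $D^{\alpha}h = \chi_{K_{\epsilon/2}} * (D^{\alpha}\varphi_{\epsilon/4})$; moreover $0 \leq \chi_{K_{\epsilon/2}} \leq 1$ together with $\int\varphi_{\epsilon/4}\,dx = 1$ gives $0 \leq h \leq 1$ on $\mathbb{R}^N$.

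The second step is the bookkeeping of radii. If $\operatorname{dist}(x,K) \leq \epsilon/4$, then every $y$ with $|x-y| \leq \epsilon/4$ satisfies $\operatorname{dist}(y,K) \leq \operatorname{dist}(x,K) + |x-y| \leq \epsilon/2$, i.e.\ the closed $\epsilon/4$-ball about $x$ lies in $K_{\epsilon/2}$, whence $h(x) = \int \chi_{K_{\epsilon/2}}(y)\varphi_{\epsilon/4}(x-y)\,dy = \int \varphi_{\epsilon/4}\,dy = 1$; so $h \equiv 1$ on the open neighbourhood $\{\operatorname{dist}(\cdot,K) < \epsilon/4\}$ of $K$. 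Conversely, if $h(x) \neq 0$ then the $\epsilon/4$-ball about $x$ meets $K_{\epsilon/2}$, so $\operatorname{dist}(x,K) \leq \epsilon/2 + \epsilon/4 = 3\epsilon/4$; in particular $h(x) = 0$ whenever $\operatorname{dist}(x,K) > \epsilon$.

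Finally, for the derivative estimates I would use $|D^{\alpha}h(x)| \leq \int_{\mathbb{R}^N} |D^{\alpha}\varphi_{\epsilon/4}(y)|\,dy = (\epsilon/4)^{-|\alpha|}\int_{\mathbb{R}^N}|(D^{\alpha}\varphi)(z)|\,dz$ after the change of variables $z = 4y/\epsilon$, so that $C_{\alpha} := 4^{|\alpha|}\,\|D^{\alpha}\varphi\|_{L^{1}(\mathbb{R}^N)}$ works, a constant depending only on $\alpha$ (and on $N$ and the once-and-for-all fixed $\varphi$). There is no genuine obstacle here; the only point that needs a little attention is coordinating the inner radius $\epsilon/2$ with the mollification scale $\epsilon/4$ so that the three requirements — equal to $1$ near $K$, vanishing outside the $\epsilon$-neighbourhood, and taking values in $[0,1]$ — hold simultaneously. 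Any splitting $\chi_{K_{a\epsilon}} * \varphi_{b\epsilon}$ with $0 < b < a$ and $a + b \leq 1$ does the job, and then $C_{\alpha}$ scales like $b^{-|\alpha|}$.
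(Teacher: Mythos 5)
Your proof is correct and complete: the paper itself gives no argument for this lemma, citing Malgrange (1.4.2), and your mollification of the characteristic function of the intermediate neighbourhood $K_{\epsilon/2}$ at scale $\epsilon/4$ is exactly the standard construction behind that reference, with the radius bookkeeping and the scaling $|D^{\alpha}\varphi_{\epsilon/4}|_{L^1}=(\epsilon/4)^{-|\alpha|}\|D^{\alpha}\varphi\|_{L^1}$ handled correctly. Nothing further is needed.
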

\section{Main result}
Before starting the main result, we prove the following lemma.
   \begin{lem}\label{Gi}
     Let $E$ be a $\mathcal{P}(y) \otimes \mathcal{P}_{d-1}(x_k)$-Markov set. Also define
    \begin{align*}
      \pi(E)=\left\{y=(x_1,\ldots,x_{k-1},x_{k+1},\ldots,x_N) \in \mathbb{R}^{N-1}  : (x_1,\ldots,x_N) \in E, \,  x_k \in \mathbb{R}\right\}.
    \end{align*}
     If $E$ is a $\mathcal{P}(y) \otimes \mathcal{P}_{d-1}(x_k)$-Markov set (with $M$ and $m$), then $\pi(E)$ is a Markov set (as a subset of $\mathbb{R}^{N-1}$) and for every polynomial $P=\sum_{i=0}^{d-1} G_i(y)x_k^i$ there exist constant $C>0$ (depending only on $E$ and $d$) such that
        \begin{align*}
         \|G_i\|_{\pi(E)} \leq \frac{C}{i!} (\deg P)^{m(d-1)} \|P\|_E,
        \end{align*}
   for every $i=0,1,\ldots,d-1$. Conversely, if $\pi(E)$ is a Markov set (with $A$ and $\eta$) and for every polynomial $P=\sum_{i=0}^{d-1} G_i(y)x_k^i$ there exist  $B, \lambda >0$ (depending only on $E$ and $d$) such that
        \begin{align}\label{bounds}
         \|G_i\|_{\pi(E)} \leq B (\deg P)^{\lambda} \|P\|_E, \quad i=0,1,\ldots,d-1,
        \end{align}
   then $E$ is a $\mathcal{P}(y) \otimes \mathcal{P}_{d-1}(x_k)$-Markov set.
   \end{lem}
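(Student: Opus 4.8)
\proof
The plan is to prove the two implications separately, exploiting the decomposition $P=\sum_{i=0}^{d-1}G_i(y)x_k^i$ together with the elementary fact that a polynomial not involving $x_k$ has the same sup norm on $E$ and on $\pi(E)$, since $\pi(E)$ is precisely the set of $y$'s occurring among the points of $E$.

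\emph{First implication.} I would first note that $\pi(E)$ is compact, being the continuous image of the compact set $E$ under the coordinate projection. To see that $\pi(E)$ is Markov, take $R\in\mathcal{P}(y)$ and view it as the element of $\textbf{F}=\mathcal{P}(y)\otimes\mathcal{P}_{d-1}(x_k)$ with $G_0=R$ and $G_i=0$ for $i\ge1$; applying the $\textbf{F}$-Markov inequality for $E$ to $R$, and to its $y$-derivatives (which again do not involve $x_k$), and using the norm identity above yields $\|D^{\beta}R\|_{\pi(E)}=\|D^{\beta}R\|_E\le M^{|\beta|}(\deg R)^{m|\beta|}\|R\|_{\pi(E)}$, so $\pi(E)$ is Markov with the same constants $M,m$. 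For the coefficient bounds the key identity is
\[
 i!\,G_i(y)=D_{x_k}^{\,i}P-\sum_{\ell=i+1}^{d-1}\frac{\ell!}{(\ell-i)!}\,G_\ell(y)\,x_k^{\,\ell-i},
\]
obtained by differentiating $P$ exactly $i$ times with respect to $x_k$; note every $D_{x_k}^{\,i}P$ lies in $\textbf{F}$, since $\textbf{F}$ is derivation invariant. For $i=d-1$ this reads $(d-1)!\,G_{d-1}=D_{x_k}^{\,d-1}P$, a polynomial with no $x_k$, so $(d-1)!\,\|G_{d-1}\|_{\pi(E)}=\|D_{x_k}^{\,d-1}P\|_E\le M^{d-1}(\deg P)^{m(d-1)}\|P\|_E$. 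One then runs a finite downward induction on $i$: the left-hand side of the identity is a polynomial in $y$ only, hence has norm $i!\,\|G_i\|_{\pi(E)}$ on $E$; bounding the right-hand side by $\|D_{x_k}^{\,i}P\|_E\le M^{i}(\deg P)^{mi}\|P\|_E$ together with $\sum_{\ell>i}\frac{\ell!}{(\ell-i)!}\|x_k\|_E^{\,\ell-i}\|G_\ell\|_{\pi(E)}$, and inserting the inductive estimates for the $G_\ell$ with $\ell>i$, gives $\|G_i\|_{\pi(E)}\le \frac{C_i}{i!}(\deg P)^{m(d-1)}\|P\|_E$ with constants $C_{d-1},\dots,C_0$ defined by the resulting recursion. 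Taking $C=\max_i C_i$ (which depends only on $M$, $m$, $\max(1,\|x_k\|_E)$ and $d$, hence only on $E$ and $d$) finishes this direction; here one may assume $\deg P\ge1$, the constant case being immediate.

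\emph{Converse.} By the observation recorded just after the definition of $\textbf{F}$-Markov set it suffices to bound $\|D^{\alpha}P\|_E$ for $|\alpha|=1$ and $P=\sum_{i=0}^{d-1}G_i(y)x_k^i\in\textbf{F}$. If $\alpha$ corresponds to differentiation in $x_k$, then $D_{x_k}P=\sum_{i=1}^{d-1}i\,G_i(y)x_k^{\,i-1}$, so $\|D_{x_k}P\|_E\le\sum_{i=1}^{d-1}i\,\|x_k\|_E^{\,i-1}\|G_i\|_{\pi(E)}\le C_1(\deg P)^{\lambda}\|P\|_E$ by (\ref{bounds}). If $\alpha$ corresponds to differentiation in one of the $y$-variables $x_j$, then $D_{x_j}P=\sum_{i=0}^{d-1}(D_{x_j}G_i)(y)x_k^{\,i}$, and since $\deg G_i\le\deg P$ and $\pi(E)$ is Markov with constants $A,\eta$,
\[
 \|D_{x_j}G_i\|_{\pi(E)}\le A\,(\deg P)^{\eta}\|G_i\|_{\pi(E)}\le AB\,(\deg P)^{\eta+\lambda}\|P\|_E ,
\]
whence $\|D_{x_j}P\|_E\le\sum_{i=0}^{d-1}\|x_k\|_E^{\,i}\|D_{x_j}G_i\|_{\pi(E)}\le C_2(\deg P)^{\eta+\lambda}\|P\|_E$. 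Setting $m=\eta+\lambda$ and $M=\max(C_1,C_2)$ (and using $\deg P\ge1$ to absorb the smaller exponents) we obtain $\|D^{\alpha}P\|_E\le M(\deg P)^{m}\|P\|_E$ in both cases, so $E$ is $\mathcal{P}(y)\otimes\mathcal{P}_{d-1}(x_k)$-Markov.

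The point I expect to require the most care is not a deep obstruction but the bookkeeping: matching the exponent $m(d-1)$ in the coefficient estimate (which is what forces $\deg P\ge1$, needed to absorb all the exponents $mi$ with $i\le d-1$), tracking how the constants $C_i$ accumulate through the downward induction, and checking $\deg G_i\le\deg P$. The only genuinely structural ingredient is the passage between $\|\cdot\|_E$ and $\|\cdot\|_{\pi(E)}$ via the top-order $x_k$-derivatives of $P$, which is what allows the coefficients $G_i$ to be extracted without any assumption on the fibres $E\cap\pi^{-1}(y)$.
\qed
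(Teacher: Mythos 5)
Your proposal is correct and follows essentially the same route as the paper: the coefficient bounds come from the same downward induction on $i$ via the identity $i!\,G_i = D_{x_k}^i P - \sum_{\ell>i}\frac{\ell!}{(\ell-i)!}G_\ell x_k^{\ell-i}$ (the paper writes out $i=d-1$ and $i=d-2$ and says ``continuing this process''), and the converse is the same product-rule estimate combining the Markov property of $\pi(E)$ with the bounds (\ref{bounds}). Your explicit verification that $\pi(E)$ is Markov (by applying the $\mathcal{P}(y)\otimes\mathcal{P}_{d-1}(x_k)$-Markov inequality to polynomials in $y$ alone and using $\|R\|_E=\|R\|_{\pi(E)}$) is a detail the paper's proof leaves implicit, and is a welcome addition.
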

      \begin{proof}
       Let $E$ be a $\mathcal{P}(y) \otimes \mathcal{P}_{d-1}(x_k)$-Markov set. The proof starts from the observation that
        \begin{align*}
        \frac{\partial^{d-1} P}{\partial x_k^{d-1}} = (d-1)!G_{d-1}
        \end{align*}
      Therefore the $\mathcal{P}(y) \otimes \mathcal{P}_{d-1}(x_k)$-Markov property of the set $E$ gives
        \begin{align*}
          \|G_{d-1}\|_{\pi(E)} \leq \frac{M^{d-1}}{(d-1)!} (\deg P)^{m(d-1)} \|P\|_E.
        \end{align*}
      If $i=d-2$ then
       \begin{align*}
         (d-2)! G_{d-2} =\frac{\partial^{d-2} P}{\partial x_k^{d-2}} - (d-1)G_{d-1}x_k
       \end{align*}
      Hence there exists constant $C>0$ (depending only on the set $E$) such that
       \begin{align*}
          \|G_{d-2}\|_{\pi(E)} \leq \frac{(C+1)M^{d-1}}{(d-2)!} (\deg P)^{m(d-1)} \|P\|_E.
       \end{align*}
      Continuing this process one can show that there exist constant $C_1>0$ (depending only on the set $E$ and $d$) such that
       \begin{align*}
         \|G_{i}\|_{\pi(E)} \leq   \frac{C_1}{i!} (\deg P)^{m(d-1)} \|P\|_E.
       \end{align*}
       To prove the converse direction, assume that $ \pi(E)$ is a Markov set and (\ref{bounds}) holds. Then for every polynomial $P=\sum_{i=0}^{d-1} G_i(y)x_k^i$ we have
          \begin{align*}
            \left\|\frac{\partial P}{\partial x_j} \right\|_E \leq \sum_{i=0}^{d-1} \left\| \frac{\partial G_i }{\partial x_j} x_k^i +  G_i \frac{\partial x_k^i}{\partial x_j} \right\|_E.
          \end{align*}
      Since $E$ is compact, there exists $K>0$, depending only on the set $E$, such that
          \begin{align*}
            \left\| \frac{\partial G_i }{\partial x_j} x_k^i +  G_i \frac{\partial x_k^i}{\partial x_j} \right\|_E \leq K \left( \left\| \frac{\partial G_i }{\partial x_j}\right\|_{\pi(E)} + \|G_i\|_{\pi(E)} \right),
          \end{align*}
      for every $ j=1,2,\ldots,N$ and  $i=0,1,\ldots,d-1$.
      Therefore
          \begin{align*}
             \left\|\frac{\partial P}{\partial x_j} \right\|_E \leq K \sum_{i=0}^{d-1}  \left\| \frac{\partial G_i }{\partial x_j}\right\|_{\pi(E)} + \|G_i\|_{\pi(E)}.
          \end{align*}
      Then, using the fact that $ \pi(E)$ is a Markov set, there exists constants $A >0$ and $\eta>0$ such
      that
          \begin{align*}
            \left\|\frac{\partial P}{\partial x_j} \right\|_E \leq K \sum_{i=0}^{d-1} A (\deg G_i)^{\eta} \|G_i\|_{\pi(E)} + \|G_i\|_{\pi(E)}.
          \end{align*}
      Finally, we use (\ref{bounds}) to see that
          \begin{align*}
            \left\|\frac{\partial P}{\partial x_j} \right\|_E \leq Kd \left( AB (\deg P)^{\eta + \lambda}  + B (\deg P)^{\lambda} \right)\|P\|_{E}.
          \end{align*}
      That concludes the proof.
      \end{proof}
We say that the set $E \subset V$ is $C^{\infty}_V$ determining if for each $f \in C^{\infty}_V(\mathbb{R}^N)$, $f_{|E}=0$ implies $D^{\alpha}f_{|E}=0$, for all $\alpha \in \mathbb{Z}^N_{+}$.
Now, our main result reads as follows.
   \begin{thm}
     If E is a $C^{\infty}_V$ determining compact subset of $V$ then the following statements are equivalent:
     \begin{description}
       \item[(i)] ($\mathcal{P}(y) \otimes \mathcal{P}_{d-1}(x_k)$-Markov Inequality) There exist positive constants $M$ and $r$ such that for each polynomial $P \in \mathcal{P}(y) \otimes \mathcal{P}_{d-1}(x_k)$ and each $\alpha \in \mathbb{Z}^N_{+}$,
             \begin{align*}
               \|D^{\alpha} P\|_E \leq M (\deg P)^{r|\alpha|} \|P\|_E.
             \end{align*}
       \item[(ii)] There exist positive constants $M$ and $r$ such that for every \\ $P \in \mathcal{P}(y) \otimes \mathcal{P}_{d-1}(x_k)$ of degree at most $n$, $n = 1,2, \ldots$,
              \begin{align*}
                |P(x)| \leq M \|P\|_E  \quad \text{if} \quad x \in E_n:=\{x \in \mathbb{R}^N : \text{dist}(x,E) \leq 1/n^r\}.
              \end{align*}
       \item[(iii)] (Bernstein's Theorem) For every function $f: E \rightarrow \mathbb{R}$, if the sequence \\$\{\text{dist}_E(f,\mathcal{P}_l(y) \otimes \mathcal{P}_{d-1}(x_k))\}$  is rapidly decreasing then there is a $C^{\infty}_V(\mathbb{R}^N)$ function $\tilde{f}$ on $\mathbb{R}^N$ such that $\tilde{f}_{|E}=f$.
       \item[(iv)] The space $(C^{\infty}_V(E), \tau_J)$ is complete and $C^{\infty}_V(E)=C^{\infty}(E)$.
       \item[(v)]  The topologies $\tau_J$ and $\tau_Q$ for $C^{\infty}_V(E)$ coincide.
     \end{description}
   \end{thm}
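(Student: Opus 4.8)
The plan is to prove the chain $(\mathrm{i})\Rightarrow(\mathrm{ii})\Rightarrow(\mathrm{iii})\Rightarrow(\mathrm{iv})\Rightarrow(\mathrm{v})\Rightarrow(\mathrm{i})$, the whole argument being a transfer of Ple\'sniak's scalar theorem \cite{P1} to $E$ \emph{through its projection} $\pi(E)$ by means of Lemma \ref{Gi}. The structural fact that makes this possible is the identity $C^{\infty}_V(\mathbb{R}^N)=\{\sum_{i=0}^{d-1}g_i(y)x_k^i:\ g_i\in C^{\infty}(\mathbb{R}^{N-1})\}$, which follows from \eqref{C}: the inclusion $\supseteq$ is the multivariate Jackson theorem \cite{Tim} applied to each $g_i$ in the $y$--variables (together with $\|x_k^i\|_I<\infty$), and $\subseteq$ is obtained by fixing $d$ nodes $\xi_1,\dots,\xi_d$ and writing $g_i(y)=\sum_{s=1}^{d}c_{is}f(y,\xi_s)$ (an inverse Vandermonde), which is $C^{\infty}$ in $y$ and reconstructs $f$. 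For $(\mathrm{i})\Rightarrow(\mathrm{ii})$ one argues as in the scalar case: for $P\in\mathcal{P}(y)\otimes\mathcal{P}_{d-1}(x_k)$ of degree $\le n$ and $x\in E_n=\{\mathrm{dist}(\cdot,E)\le n^{-r}\}$ pick $x_0\in E$ with $|x-x_0|\le n^{-r}$ and expand $P(x)=\sum_{\alpha}\frac{D^{\alpha}P(x_0)}{\alpha!}(x-x_0)^{\alpha}$; since $\mathcal{P}(y)\otimes\mathcal{P}_{d-1}(x_k)$ is stable under $D^{\alpha}$, $(\mathrm{i})$ bounds $\|D^{\alpha}P\|_E\le Mn^{r|\alpha|}\|P\|_E$, and summing (using $|(x-x_0)^{\alpha}|\le n^{-r|\alpha|}$) gives $|P(x)|\le Me^{N}\|P\|_E$, which is $(\mathrm{ii})$ after relabelling.

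Implication $(\mathrm{ii})\Rightarrow(\mathrm{iii})$ is the technical heart. First I upgrade $(\mathrm{ii})$ to the Markov inequality for $E$: for $x\in E$ a cube $I$ of side $\simeq n^{-r}$ centred at $x$ lies in $E_n$, so $(\mathrm{ii})$ gives $\|P\|_I\le M\|P\|_E$, and the classical multivariate Markov inequality on $I$ yields $\|D^{\alpha}P\|_E\le M'n^{(r+2)|\alpha|}\|P\|_E$; thus $(\mathrm{i})$ holds, so by Lemma \ref{Gi} the set $\pi(E)$ is Markov and the coefficient bounds there are valid. Now take $f\colon E\to\mathbb{R}$ with rapidly decreasing approximation numbers, near--best polynomials $P_l=\sum_i G_i^{(l)}(y)x_k^i\in\mathcal{P}_{2^l}(y)\otimes\mathcal{P}_{d-1}(x_k)$, and $Q_l=P_{l+1}-P_l=\sum_i H_i^{(l)}(y)x_k^i$. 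Applying Lemma \ref{Gi} to $Q_l$ shows that $\|H_i^{(l)}\|_{\pi(E)}$ is rapidly decreasing, so $(G_i^{(l)})_l$ is Cauchy in $C(\pi(E))$ with a limit $g_i$ whose polynomial approximation numbers on $\pi(E)$ are rapidly decreasing; since a Markov set is $C^{\infty}$--determining and Ple\'sniak's theorem \cite{P1} then gives the extension property, each $g_i$ extends to $\tilde g_i\in C^{\infty}(\mathbb{R}^{N-1})$, and $\tilde f:=\sum_{i=0}^{d-1}\tilde g_i(y)x_k^i\in C^{\infty}_V(\mathbb{R}^N)$ satisfies $\tilde f|_E=\lim_l P_l=f$.

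For $(\mathrm{iii})\Rightarrow(\mathrm{iv})$: completeness of $(C^{\infty}_V(E),\tau_J)$ is the usual Fr\'echet argument — a $\tau_J$--Cauchy sequence $(f_j)$ converges uniformly on $E$, and letting the running index tend to infinity inside the $\sup_l$ shows $\delta_\nu(f_j-f)\to0$ and that the limit $f$ has rapidly decreasing approximation numbers, whereupon $(\mathrm{iii})$ puts $f$ back into $C^{\infty}_V(E)$. The equality $C^{\infty}_V(E)=C^{\infty}(E)$ reduces, since $\subseteq$ is automatic from $C^{\infty}_V(\mathbb{R}^N)\subseteq C^{\infty}(\mathbb{R}^N)$, to showing that every $G|_E$ with $G\in C^{\infty}(\mathbb{R}^N)$ is rapidly approximable on $E$ by $\mathcal{P}_m(y)\otimes\mathcal{P}_{d-1}(x_k)$; here one needs the elementary reduction (induction on $a$) that on $V$ one has $x_k^a\equiv\sum_{i<d}c_{a,i}(y)x_k^i$ with $\deg c_{a,i}\le Ca$, $C$ depending only on $d$ and $\max_j\deg Q_j$, so that the restriction to $V$ of a polynomial of degree $n$ lies in $\mathcal{P}_{Cn}(y)\otimes\mathcal{P}_{d-1}(x_k)$, and Jackson on a cube containing $E$ finishes it. For $(\mathrm{iv})\Rightarrow(\mathrm{v})$: the inclusion $\tau_J\preceq\tau_Q$ holds unconditionally — apply Jackson to the components $\tilde g_i$ of an extension $\tilde f$ and control $\|D^{\gamma}\tilde g_i\|$ by derivatives of $\tilde f$ via the Lagrange formula above, giving $\delta_\nu\le C\,q_{K,\nu+1}$; since $\tau_Q$ makes $C^{\infty}_V(E)$ a Fr\'echet space and $\tau_J$ is complete by $(\mathrm{iv})$, the open mapping theorem applied to the identity map forces $\tau_J=\tau_Q$.

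Finally $(\mathrm{v})\Rightarrow(\mathrm{i})$, the only place where the $C^{\infty}_V$--determining hypothesis enters. From $\tau_J=\tau_Q$ the seminorm $q_{K,1}$ (for a fixed compact $K\supseteq E$) is $\tau_J$--continuous, so $q_{K,1}(h)\le C\,\delta_{\nu'}(h)$ for all $h\in C^{\infty}_V(E)$. For $P\in\mathcal{P}(y)\otimes\mathcal{P}_{d-1}(x_k)$, any $\tilde f\in C^{\infty}_V(\mathbb{R}^N)$ with $\tilde f|_E=P|_E$ has $\tilde f-P\in C^{\infty}_V(\mathbb{R}^N)$ vanishing on $E$, hence $D^{e_j}\tilde f=D^{e_j}P$ on $E$ by the determining property; taking the infimum over such $\tilde f$ gives $q_{K,1}(P|_E)\ge\max_j\|D^{e_j}P\|_E$, while $\delta_{\nu'}(P|_E)\le(\deg P)^{\nu'}\|P\|_E$ since $\mathrm{dist}_E(P,\mathcal{P}_l(y)\otimes\mathcal{P}_{d-1}(x_k))=0$ for $l\ge\deg_y P$ and is $\le\|P\|_E$ otherwise. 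Combining, $\|D^{e_j}P\|_E\le C(\deg P)^{\nu'}\|P\|_E$, which is $(\mathrm{i})$ for $|\alpha|=1$, and stability under differentiation yields the general case. The step I expect to be the main obstacle is $(\mathrm{ii})\Rightarrow(\mathrm{iii})$: one must pass correctly from the weak pointwise bound on the shrinking neighbourhoods $E_n$ to a genuine Markov inequality on $E$, then invoke Lemma \ref{Gi} with control of the coefficient bounds uniform in $l$, and be certain that the projected data $(g_i)$ really satisfy the hypotheses of the scalar theorem on $\pi(E)$ — in particular that $\pi(E)$, being Markov, is $C^{\infty}$--determining.
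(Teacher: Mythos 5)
Your proposal is correct, and its overall skeleton (transfer to the projection $\pi(E)$ via Lemma \ref{Gi}, Jackson plus the open mapping theorem for the topological equivalences, and the $C^{\infty}_V$-determining hypothesis entering only in the step recovering the Markov inequality from $\tau_J=\tau_Q$) matches the paper; your treatments of (iv)$\Rightarrow$(v) and (v)$\Rightarrow$(i) are essentially the ones in the text. Where you genuinely diverge is the passage to (iii). The paper proves (i)$\Rightarrow$(iii) by re-running Ple\'sniak's Whitney-type gluing in this setting: it takes the metric projections $P_l=\sum_i G_{l,i}(y)x_k^i$, the cut-off functions $h_l$ of Lemma \ref{fh} built around $\pi(E)$ with $\epsilon_l=1/l^r$, defines $\tilde f=\sum_i G_{0,i}x_k^i+\sum_l\sum_i h_l\,(G_{l,i}-G_{l-1,i})x_k^i$, and estimates $D^{\gamma}\bigl(h_l(G_{l,i}-G_{l-1,i})\bigr)$ directly using (i), (ii) on $\pi(E)$ and the coefficient bounds of Lemma \ref{Gi}. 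You instead upgrade (ii) to (i) by the cube argument, use Lemma \ref{Gi} to show the coefficient functions $g_i=\lim_l G_i^{(l)}$ are rapidly approximable on $\pi(E)$, and then black-box Ple\'sniak's scalar theorem \cite{P1} on $\pi(E)$ to extend each $g_i$, setting $\tilde f=\sum_i\tilde g_i(y)x_k^i$. This is a legitimate and arguably cleaner route: it avoids reproving the gluing construction, at the price of two auxiliary facts you should make explicit rather than just assert — that a Markov compact set such as $\pi(E)$ is automatically $C^{\infty}$-determining (true; it is proved exactly as in the paper's closing Remark, via Jackson, telescoping and Markov), and the structural identification $C^{\infty}_V(\mathbb{R}^N)=\{\sum_{i<d}g_i(y)x_k^i:\ g_i\in C^{\infty}(\mathbb{R}^{N-1})\}$, whose inclusion $\subseteq$ needs the observation that a uniform limit of polynomials of degree $\le d-1$ in $x_k$ is again such a polynomial (finite-dimensionality) before the inverse-Vandermonde formula applies; the paper only uses the easy direction of this identification implicitly. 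A further point in your favour: your explicit degree-reduction $x_k^a\equiv\sum_{i<d}c_{a,i}(y)x_k^i$ on $V$ with $\deg c_{a,i}=O(a)$ substantiates the paper's terse claim in (iii)$\Rightarrow$(iv) that $C^{\infty}(E)\subseteq C^{\infty}_V(E)$, and your component-wise Jackson estimate with Lagrange control of $\|D^{\gamma}\tilde g_i\|$ fills in the one-line inequality $\delta_\nu(f)\le C_\nu q_{I,\nu+1}(f)$ that the paper attributes to Jackson's theorem without detail. I see no gap in your argument.
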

\begin{proof}
The proof of equivalence of (\textbf{i}) and (\textbf{ii}) is almost the same as in \cite{P1}, and we omit the details.  Next we show that $(\textbf{i}) \Leftrightarrow (\textbf{iii}) \Leftrightarrow (\textbf{iv}) \Leftrightarrow (\textbf{v})$. Suppose that we have function
$f: E \rightarrow \mathbb{R}$ such that for each $s > 0$,
   \begin{align*}
     \lim_{l \rightarrow \infty} l^s \|f - P_l\|_E =0.
   \end{align*}
Here $P_l=\sum_{i=0}^{d-1} G_{l,i}(y)x_k^i$ is a metric projection of $f$ onto $\mathcal{P}_l(y) \otimes \mathcal{P}_{d-1}(x_k)$ ($l=0,1,\ldots$). Set, as in Lemma \ref{Gi},
    \begin{align*}
      \pi(E)=\{y=(x_1,\ldots,x_{k-1},x_{k+1},\ldots,x_N) \in \mathbb{R}^{N-1}  : (x_1,\ldots,x_N) \in E, \, x_k \in \mathbb{R}\}.
    \end{align*}
We assume that $r$ is an integer so large that both (\textbf{i}) and (\textbf{ii}) are valid for $E$. Let $\epsilon_l=1/l^r$  and for $l= 1, 2,\ldots$ take a function $h_l \in C^{\infty}(\mathbb{R}^{N-1})$ of Lemma \ref{fh} corresponding to $\epsilon_l$ and $\pi(E)$. We will show that
    \begin{align*}
      \tilde{f}(x_1,\ldots,x_N):=\sum_{i=0}^{d-1} G_{0,i}(y)x_k^i + \sum_{l=1}^{\infty}  \sum_{i=0}^{d-1} h_l(y) (G_{l,i}(y)-G_{l-1,i}(y))x_k^i
    \end{align*}
determines a function from $C^{\infty}_V(\mathbb{R}^N)$ such that $\tilde{f}_{|E}=f$. In order to prove that $\tilde{f} \in C^{\infty}_V(\mathbb{R}^N)$, it suffices to check that
    \begin{align*}
      G_{0,i}(y) + \sum_{l=1}^{\infty}  h_l(y) (G_{l,i}(y)-G_{l-1,i}(y)) \in C^{\infty}(\mathbb{R}^{N-1}),
    \end{align*}
for every $i=0,1,\ldots,d-1$. Thus, if $\gamma \in \mathbb{Z}^{N-1}_{+}$, then, by (\textbf{i}) and (\textbf{ii}),
    \begin{align*}
      \sup_{\mathbb{R}^{N-1}} |D^{\gamma}(h_l(G_{l,i}-G_{l-1,i}))| \leq& \sum_{\beta \leq \gamma} {\gamma \choose \beta} \sup_{\pi(E)_l} |D^{\beta}h_l D^{\gamma - \beta}(G_{l,i}-G_{l-1,i})| \\
      \leq& M \sum_{\beta \leq \gamma} {\gamma \choose \beta} C_{\beta} l^{r|\beta|} \|D^{\gamma - \beta}(G_{l,i}-G_{l-1,i})\|_{\pi(E)} \\ \leq& M_1 l^{r|\gamma|} \|G_{l,i}-G_{l-1,i}\|_{\pi(E)}.
    \end{align*}
where $\pi(E)_l:=\{y \in \mathbb{R}^{N-1} : \text{dist}(y,\pi(E)) \leq \epsilon_l \}$.
From Lemma \ref{Gi} there is a constant $C>0$ so that
     \begin{align*}
        \sup_{\mathbb{R}^{N-1}} |D^{\gamma}(h_l(G_{l,i}-G_{l-1,i}))| \leq C (\deg P_l)^{r(|\gamma|+d-1)} \|P_l - P_{l-1}\|_E \leq
     \end{align*}
Now if $l \geq \max\{2,d\}$, then
      \begin{align*}
        \sup_{\mathbb{R}^{N-1}} |D^{\gamma}(h_l(G_{l,i}-G_{l-1,i}))| \leq 2C l^{-2} \delta_{2r(|\gamma|+d-1)+2}(f)
      \end{align*}
with a constant $C$ independent of $l$. \newline
\indent  Assume now (iii). It is clear that $C^{\infty}_V(E)=C^{\infty}(E)$ and (iv) follows from continuity of the map $C(E) \ni f \rightarrow \text{dist}_E(f,\mathcal{P}_l(y) \otimes \mathcal{P}_{d-1}(x_k)) \in \mathbb{R}$. Let now $I$ be a
compact cube in $\mathbb{R}^{N}$ containing $E$ in its interior. By Jackson's theorem, for every $\nu$ there is a constant $C_\nu >0$ such that for each $f \in C^{\infty}_V(\mathbb{R}^N)$,
      \begin{align*}
         \delta_\nu(f) \leq C_\nu q_{I,\nu+1} (f).
      \end{align*}
Hence, if $(C^{\infty}_V(E), \tau_J)$ is complete, by Banach's theorem the topologies $\tau_J$ and $\tau_Q$ are equal, and we get (\textbf{v}). \newline
The final step of the proof is to show that (\textbf{v}) implies  (\textbf{i}). If topologies $\tau_J$ and $\tau_Q$ coincide, there are a positive constant $M$ and an integer $\mu \geq - 1$ such that for each $f \in C^{\infty}_V(E)$, we have $q_{E,1}(f) \leq M \delta_{\mu}(f)$. Since  $\pi(E)$ is $C^{\infty}$ determining and $\delta_{0}(f) \leq \|f\|$, we must have $\mu \geq 1$.  In particular, if $f \in \mathcal{P}_\lambda(y) \otimes \mathcal{P}_{d-1}(x_k)$ , we get
        \begin{align*}
          \left\| \frac{\partial f}{\partial x_j} \right\|_E \leq M \sup_{1 \leq l \leq \lambda} l^{\mu} \text{dist}_E(f,\mathcal{P}_l(y) \otimes \mathcal{P}_{d-1}(x_k)) \leq M \lambda^{\mu} \|f\|_E
        \end{align*}
for $j= 1, 2,\ldots, N$, which implies that $E$ is a $\mathcal{P}(y) \otimes \mathcal{P}_{d-1}(x_k)$-Markov set. (We needed the assumption that $E$ is $C^{\infty}_V$  determining.)
\end{proof}
  \begin{rem}
  It is worth noting that, if $E$ satisfies (\textbf{i}) then $E$ must be $C^{\infty}_V$  determining. To see this, take a compact cube $I$ in $\mathbb{R}^N$ containing $E$ in its interior. We let $f \in C^{\infty}_V(\mathbb{R}^N)$ such that $f=0$ on $E$. It follows from the definition of $C^{\infty}_V(\mathbb{R}^N)$ that
     \begin{align*}
        \epsilon_l:=\text{dist}_I (f,\mathcal{P}_l(y) \otimes \mathcal{P}_{d-1}(x_k))=\|f-P_l\|_I=\|f-\sum_{i=0}^{d-1} G_{l,i}(y)x_k^i\|_I
     \end{align*}
  is rapidly decreasing and by Markov's inequality, for each $\alpha \in \mathbb{Z}^N_{+}$, we have
      \begin{align*}
        D^{\alpha}f= \sum_{i=0}^{d-1} D^{\alpha}\{ G_{0,i}(y)x_k^i\} + \sum_{l=1}^{\infty}  \sum_{i=0}^{d-1}  D^{\alpha} \{(G_{l,i}(y)-G_{l-1,i}(y))x_k^i\} \quad \text{on } I.
      \end{align*}
  Finally, by $(\textbf{i})$, we obtain that $D^{\alpha}f(x)=\lim_{l\rightarrow \infty} D^{\alpha}P_l(x)=0$ for every $x \in E$.

  \end{rem}
\section*{Acknowledgment}
The author was supported by the Polish National Science Centre (NCN) Opus grant no. 2017/25/B/ST1/00906.
\bibliographystyle{amsplain}

\noindent Tomasz Beberok\\
Department of Applied Mathematics,\\
University of Agriculture in Krakow,\\
ul. Balicka 253c, 30-198 Kraków, Poland\\
email: tbeberok@ar.krakow.pl

% ------------------------------------------------------------------------
\end{document}